\newtheorem{theorem}{Theorem}[section]
\theoremstyle{remark}
\newtheorem{remark}[theorem]{Remark}
\numberwithin{equation}{section}
\newcommand{\mo}{{-1}}
\newcommand{\calG}{\ensuremath{\mathcal{G}}}
\newcommand{\calN}{\ensuremath{\mathcal{N}}}
\newcommand{\calO}{\ensuremath{\mathcal{O}}}
\begin{document}

\title{A spectral bound for vertex-transitive graphs}
 
\author{Arindam Biswas}
\address{Department of Mathematical Sciences,
	University of Copenhagen,
	Universitetsparken 5,
	DK-2100 Copenhagen, Denmark}
\email{ab@math.ku.dk}
\thanks{}

\author{Jyoti Prakash Saha}
\address{Department of Mathematics, Indian Institute of Science Education and Research Bhopal, Bhopal Bypass Road, Bhauri, Bhopal 462066, Madhya Pradesh,
India}
\curraddr{}
\email{jpsaha@iiserb.ac.in}
\thanks{}

\subjclass[2010]{05C25, 05C50}

\keywords{Spectral gap, vertex-transitive graphs, discrete Cheeger--Buser inequality}

\begin{abstract}
For any finite, undirected, non-bipartite, vertex-transitive graph, we establish an explicit lower bound for the smallest eigenvalue of its normalised adjacency operator, which depends on the graph only through its degree and its isoperimetric constant. 
\end{abstract}

\maketitle

\section{Introduction}

The graphs that are considered in this article are undirected, finite, connected and regular. The properties of the spectra of a graph, viz., the distribution of eigenvalues of its adjacency operator or its Laplacian operator, encode a lot of information about its structural properties. Thus, they are an important object of study not only in mathematics, but in other sciences as well where graph theory is applied. Some of the structural properties, e.g., expansion, diameter, Hamiltonicity etc. (to name a few), depend on the spectral gap, which depending on the context can mean either the difference between the largest and the second largest eigenvalues of its normalised adjacency operator, or the difference between the largest and the second largest eigenvalues (in absolute value) of its normalised adjacency operator. The study of spectral bounds, i.e., bounds for the spectral gap in terms of various invariants of a graph, is an important direction of research. 

From the discrete Cheeger--Buser inequality, established by Alon and Milman \cite{AlonMilmanIsoperiIneqSupConcen}, it follows that the second largest eigenvalue of the normalised adjacency operator of a finite graph is bounded uniformly away from $1$ in terms of its degree and its isoperimetric constant. 

In the case of Cayley graphs, a uniform lower bound for the smallest nontrivial eigenvalue was established qualitatively by Breuillard--Green--Guralnick--Tao in \cite[Appendix E]{BGGTExpansionSimpleLie}. Later, a quantitative version of this fact was established by the first author \cite{BiswasCheegerCayley} and the same phenomenon with explicit bounds were also shown to hold true for variants of Cayley graphs, in prior works of the authors \cite{CheegerCayleySum, CheegerTwisted}. Recently, an improved bound for Cayley graphs has been given by Moorman--Ralli--Tetali \cite{CayleyBottomBipartite}. However, it is known that the above fact doesn't hold true for general regular graphs. This leads to the question that for which other subclass of regular graphs does the above phenomenon occur. In particular, the situation for vertex-transitive graphs was unknown. In this article, we address this issue.

\begin{theorem}
\label{Thm:Intro}
For any finite, undirected, non-bipartite, vertex-transitive graph of degree $d$ having isoperimetric constant $h$, the nontrivial spectrum of its normalised adjacency operator is contained in $\left(-1 + \frac{h^4}{2^9 d^{10}}, 1- \frac{h^2}{2d^2}\right]$. 
\end{theorem}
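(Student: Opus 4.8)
The plan is to prove the two bounds separately, since they have entirely different characters. The upper bound $\lambda_2 \le 1 - \frac{h^2}{2d^2}$ is nothing but the Cheeger direction of the discrete Cheeger--Buser inequality of Alon--Milman quoted in the introduction, applied to the normalised Laplacian $I - A$ with edge-isoperimetric constant $h = \min_{0 < |S| \le |V|/2} e(S,\bar S)/|S|$; this needs no new idea and does not use vertex-transitivity. All the content is in the lower bound $\lambda_{\min} > -1 + \frac{h^4}{2^9 d^{10}}$ on the bottom of the spectrum, which is where vertex-transitivity must enter, since the statement is known to fail for general regular graphs.

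For the lower bound I would pass to the bipartite double cover $\tilde G = G \times K_2$, on vertex set $V \times \{+,-\}$ with $(u,+) \sim (v,-)$ exactly when $u \sim v$. Then $\tilde G$ is $d$-regular with adjacency spectrum the symmetric set $\{\pm\lambda : \lambda \in \mathrm{spec}(A)\}$, and it is connected because $G$ is connected and non-bipartite. Hence $\lambda_2(\tilde G) = \max\!\big(\lambda_2(G),\, -\lambda_{\min}(G)\big)$, so the top spectral gap of $\tilde G$ equals $\min\!\big(1-\lambda_2(G),\, 1+\lambda_{\min}(G)\big)$. Applying the ordinary Cheeger inequality $1-\lambda_2(\tilde G) \ge \frac{h(\tilde G)^2}{2d^2}$ to $\tilde G$ and using $\min(\cdot,\cdot) \le 1+\lambda_{\min}(G)$ gives $1 + \lambda_{\min}(G) \ge \frac{h(\tilde G)^2}{2d^2}$, so the whole theorem reduces to the combinatorial key lemma $h(\tilde G) \ge \frac{h^2}{16 d^4}$. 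Note that $\tilde G$ inherits vertex-transitivity, as $\mathrm{Aut}(G)$ together with the sheet-swap involution $\sigma:(v,\pm)\mapsto(v,\mp)$ acts transitively on $V\times\{+,-\}$.

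To prove the key lemma, I would take a set $\tilde S = (S_+\times\{+\}) \cup (S_-\times\{-\})$ with $|\tilde S|\le |V|$ realising $h(\tilde G)$ and analyse the pair $S_+,S_-\subseteq V$. Writing $S = S_+\cup S_-$, a direct edge count shows that in the disjoint case $S_+\cap S_- = \emptyset$ one has $|\partial_{\tilde G}\tilde S| = 2e(S_+) + 2e(S_-) + e(S,\bar S)$, where $e(X)$ denotes the number of internal edges of $X$, so $|\partial_{\tilde G}\tilde S|/|\tilde S|$ is precisely the bipartiteness ratio of the ordered pair $(S_+, S_-)$; the symmetric case $S_+ = S_-$ is an ordinary cut of $G$ and is controlled directly by $h$, and the sheet-swap $\sigma$ shows these are the binding configurations. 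When the near-bipartite witness satisfies $|S| \le |V|/2$, the inequalities $e(S,\bar S) \le |\partial_{\tilde G}\tilde S|$ and $e(S,\bar S) \ge h|S|$ combine at once to give a lower bound linear in $h$, which is more than enough.

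The main obstacle is the remaining regime, where the near-bipartite set $S$ covers more than half of $V$; this is exactly the regime realised by odd cycles, where the minimal cut of $\tilde G$ is genuinely asymmetric and ordinary isoperimetry is too weak. Here I would exploit vertex-transitivity to propagate the local $2$-colouring $\chi = \mathbb{1}_{S_+} - \mathbb{1}_{S_-}$: since $|S| > |V|/2$, every translate $\gamma S$ with $\gamma \in \mathrm{Aut}(G)$ satisfies $|S \cap \gamma S| \ge 2|S| - |V| > 0$, and on the overlap the two induced $2$-colourings either agree, letting one glue translates to enlarge the consistently coloured region, or disagree, producing monochromatic defect edges. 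Non-bipartiteness forces the gluing to fail, transitivity spreads the resulting defects uniformly, and this yields a lower bound on $e(S_+)+e(S_-)$ per vertex in terms of $h$ and $d$. Turning this global consistency argument into the explicit estimate $h(\tilde G) \ge \frac{h^2}{16 d^4}$ — and thereby absorbing the extra factor of $h$ and the powers of $d$ lost in the large-$S$ analysis — is the technical heart of the proof; combined with the reduction above it gives $1 + \lambda_{\min} \ge \frac{1}{2d^2}\big(\frac{h^2}{16 d^4}\big)^2 = \frac{h^4}{2^9 d^{10}}$, as claimed.
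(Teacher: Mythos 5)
Your reduction is correct as far as it goes, and it is a genuinely different framing from the paper's: the spectral identity $\lambda_2(\tilde G)=\max\bigl(\lambda_2(G),-\lambda_{\min}(G)\bigr)$ for the bipartite double cover, its connectedness (from non-bipartiteness of $G$), its vertex-transitivity, and the application of the Cheeger inequality to $\tilde G$ are all sound, and the arithmetic $\frac{1}{2d^2}\bigl(\frac{h^2}{16d^4}\bigr)^2=\frac{h^4}{2^9d^{10}}$ matches the claimed constant. The paper never passes to the double cover; it instead decomposes the edge set into permutations $\theta_1,\dots,\theta_d$ via Birkhoff--von Neumann, chooses a minimal transitive subgroup $\calG$ of $\mathrm{Aut}(V,E)$ (so that no index-two subgroup acts transitively), and runs the contradiction argument of Theorem \ref{Thm:theta}: an eigenvalue in $[-1,-1+\ell_{\varepsilon,d}]$ yields an index-two subgroup $H\leq\calG$ with an orbit $\calO$ satisfying mutually contradictory bounds on $|\theta_i(\calO)\cap\calO|$, the lower bound coming from the fact (Equation \eqref{Eqn:QuasiAutoIJ}) that at least a $1/d$ proportion of the relevant coset carries a fixed edge at a vertex coherently.

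However, your proof has a genuine gap exactly where you locate the ``technical heart'': the key lemma $h(\tilde G)\geq\frac{h^2}{16d^4}$ is asserted, not proved. Your case analysis correctly isolates the binding regime --- a near-bipartition $(S_+,S_-)$ whose support $S=S_+\cup S_-$ has $|S|>|V|/2$, which is what odd cycles realise and where the hypothesis $e(S,\bar S)\geq h|S|$ is unavailable --- but the treatment offered for it (``glue translates where colourings agree, non-bipartiteness forces defects, transitivity spreads them'') is a programme, not an argument: no quantitative claim is formulated, no inequality is derived, and nothing explains how one extracts a defect count proportional to $h^2/d^4$ rather than merely a nonzero defect. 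Since the statement fails for general regular graphs, all the content of the theorem is concentrated in precisely this step; it is the step for which the paper needs its heaviest machinery (the index-two subgroup $H$, the two-orbit structure forced by minimality of $\calG$, and the orbit-counting via transitivity of order $t$), and your gluing idea is morally this same mechanism with the group structure stripped out --- but without that structure there is no visible route from your sketch to any explicit bound, let alone $h^2/(16d^4)$, whose constants appear to be reverse-engineered from the target. Two smaller unaddressed points: the mixed case $\emptyset\neq S_+\cap S_-\subsetneq S_+\cup S_-$ of the cut $\tilde S$ is dismissed with an appeal to the sheet-swap involution but never analysed; and your route, resting on the non-strict Cheeger inequality, yields only $\lambda_{\min}\geq -1+\frac{h^4}{2^9d^{10}}$, whereas the theorem claims the open interval (the paper gets strictness because it refutes the existence of an eigenvalue in the closed interval $[-1,-1+\ell_{\varepsilon,d}]$).
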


The above result proves that combinatorial expansion implies two-sided spectral expansion for vertex-transitive graphs. Such a result was first proved by Breuillard--Green--Guralnick--Tao for Cayley graphs \cite[Appendix E]{BGGTExpansionSimpleLie}. 

For undirected, non-bipartite, vertex-transitive graphs, Theorem \ref{Thm:Intro} gives a lower bound for the smallest eigenvalue of the normalised adjacency operator in terms of the isoperimetric constant and the degree, thus, providing an analogue of the discrete Buser inequality for the smallest eigenvalue of the normalised adjacency operator. It would be interesting to investigate whether an analogue of the discrete Cheeger inequality holds for the smallest eigenvalue of the normalised adjacency operator, i.e., whether an upper bound for the smallest eigenvalue of the normalised adjacency operator in terms of the isoperimetric constant and the degree can be established for undirected, non-bipartite, vertex-transitive graphs, or even for the special case of Cayley graphs.  

\section{Proof of the spectral bound}

Theorem \ref{thmPrincipalk1} states that if the vertex set of an undirected, regular, non-bipartite graph $(V, E)$ carries a transitive action of a group $\calG$, and there is a nice interplay between the action of $\calG$ and the neighbourhoods in the square graph of $(V, E)$ (cf. condition (4), Theorem \ref{thmPrincipalk1}), and $\calG$ acts through ``almost graph automorphisms'' (i.e., the neighbourhood of the image of any given vertex $v$ under an element $g$ of $\calG$, need not necessarily be the image of the neighbourhood of $v$ under $g$, but it consists of the images of the neighbours of $v$ under certain homomorphic images of $g$ (cf. condition (3), Theorem \ref{thmPrincipalk1} for a precise statement)), then the smallest eigenvalue of the normalised adjacency operator of $(V, E)$ is bounded away from $-1$ in terms of its degree and its isoperimetric constant. 

The novelty in our approach is the crucial observation that for a vertex-transitive graph $(V, E)$, although the group $\mathrm{Aut}(V, E)$ of its automorphisms need not act on $V$ through ``almost graph automorphisms'', but the fact that $\mathrm{Aut}(V, E)$ preserves the adjacency relations, implies that a positive proportion (depending on the degree of $(V, E)$) of elements of $\mathrm{Aut}(V, E)$ acts on a given vertex through a variant (as in Equation \eqref{Eqn:QuasiAutoIJ}) of the ``almost graph automorphism'' condition (as in condition (3), Theorem \ref{thmPrincipalk1}), and that the validity of Equation \eqref{Eqn:QuasiAutoIJ} for enough elements of $\mathrm{Aut}(V, E)$ yields a lower bound on the spectra in the desired form. In the course of the proof, Equation \eqref{Eqn:QuasiAutoIJ} allows us to deal with vertex-transitive graphs in general, without imposing the supplementary assumption that their automorphism groups act through ``almost graph automorphisms''. 

In the following, $(V, E)$ denotes a finite graph (which may contain multiple edges, and even multiple loops at certain vertices). The neighbourhood of a subset $V'$ of $V$ in $(V, E)$ is denoted by $\calN(V')$. We assume that $(V, E)$ is undirected and regular of degree $d$, i.e., its adjacency matrix is symmetric and has the constant vector $[1, \ldots, 1]$ as an eigenvector with eigenvalue $d$. 

\begin{theorem}[{\cite[Theorem 3.2]{CheegerTwisted}}]\label{thmPrincipalk1}
Suppose there exist permutations $\theta_1, \ldots, \theta_d: V\to V$ such that the vertices $v, \theta_i(v)$ are adjacent in $(V, E)$ for any $v\in V$ and $1\leq i \leq d$, and that $\calN(v)$ is equal to $\cup_{i=1}^d \{\theta_i(v)\}$ for any $v\in V$. Assume that $V$ carries a left action of a group $\calG$ such that the following conditions hold. 
\begin{enumerate}
\item No index two subgroup of $\calG$ acts transitively on $V$.

\item The action of $\calG$ on the set $V$ is ``transitive of order $t$'' in the sense that for each $(u, v)\in V\times V$, the equation $gu = v$ has exactly $t$ distinct solutions for $g\in \calG$. 

\item For each $\theta_i$ with $1\leq i\leq d$ and each $v\in V$, there is an automorphism or an anti-automorphism $\psi_{i,v}$ of the group $\calG$ such that one of 
$$\theta_i(g\cdot v) =  \psi_{i,v}(g) \cdot \theta_i(v)$$
and 
$$\theta_i(g\cdot v) =  \psi_{i, v}(g^\mo) \cdot \theta_i(v)$$
holds for any $g\in \calG$. 

\item For any $\tau\in \calG$, the set 
$
\calN (\calN(\tau(A)))$ is contained in $\tau(\calN(\calN(A)))
$.
\end{enumerate}
Assume that $|V|\geq 4$, the graph $(V, E)$ is non-bipartite and it is an $\varepsilon$-vertex expander for some $\varepsilon>0$. Then the nontrivial eigenvalues of the normalised adjacency operator of $(V, E)$ are greater than $-1 + \ell_{\varepsilon, d}$ with 
$$\ell_{\varepsilon, d}
=\frac{\varepsilon^4}{2^{12} d^8}.$$
\end{theorem}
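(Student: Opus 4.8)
The plan is to reduce the two-sided problem to a one-sided spectral-gap problem for the square graph, and then to invoke the discrete Cheeger--Buser inequality of Alon--Milman there. Write $M=\tfrac1d A$ for the normalised adjacency operator, with eigenvalues $1=\mu_1\geq\cdots\geq\mu_n\geq-1$; the goal is to bound $\mu_n$ away from $-1$. The decisive elementary observation is that $\mu_n^2$ is an eigenvalue of $M^2$, and $M^2$ is precisely the normalised adjacency operator of the square graph $(V,E)^{(2)}$, whose neighbourhoods are the double neighbourhoods $\calN(\calN(\cdot))$ and whose degree is $d^2$. The top eigenvalue $1$ of $M^2$ comes from the constant vector, so controlling the second-largest eigenvalue $\lambda$ of $M^2$ controls $\mu_n^2\leq\lambda$, and a routine manipulation gives $1+\mu_n\geq\tfrac12\bigl(1-\mu_n^2\bigr)\geq\tfrac12(1-\lambda)$ for every $\mu_n\geq-1$.

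First I would verify that $(V,E)^{(2)}$ is connected, so that $1$ is a simple eigenvalue of $M^2$ and $\lambda<1$. This is where non-bipartiteness enters: for a connected graph the square graph (of length-two walks) is connected precisely when the graph is non-bipartite. Condition (1) reinforces this at the level of the $\calG$-action, since if $(V,E)^{(2)}$ split into two $\calG$-stable halves, the setwise stabiliser of a half would be an index-two subgroup of $\calG$ acting transitively on it, which is forbidden. With connectivity in hand, $\lambda=\max_{i\geq2}\mu_i^2$ is strictly below $1$ and dominates $\mu_n^2$.

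The heart of the argument is to transfer the $\varepsilon$-vertex-expansion of $(V,E)$ to an explicit isoperimetric lower bound for $(V,E)^{(2)}$, and this is where the remaining hypotheses are used. Condition (4) states $\calN(\calN(\tau(A)))\subseteq\tau(\calN(\calN(A)))$ for every $\tau\in\calG$; applying it to both $\tau$ and $\tau^\mo$ upgrades the inclusion to an equality, so $\calG$ acts on $(V,E)^{(2)}$ by honest graph automorphisms, and together with the transitivity of order $t$ of condition (2) this makes the square graph vertex-transitive, so its isoperimetric constant may be estimated on a single orbit. The permutations $\theta_i$ and the almost-automorphism relations $\theta_i(g\cdot v)=\psi_{i,v}(g^{\pm1})\cdot\theta_i(v)$ of condition (3) then let me iterate the one-step bound $|\calN(S)|\geq(1+\varepsilon)|S|$, writing $\calN(\calN(S))=\cup_{i,j}\theta_i\theta_j(S)$ and using the intertwining to move the second layer of expansion past the group action, yielding $|\calN(\calN(S))|\geq(1+h')|S|$ for all $S$ with $|S|\leq|V|/2$ and an explicit $h'\gtrsim\varepsilon^2/d^2$. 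The delicate point, and the main obstacle, is the regime in which $|\calN(S)|$ already exceeds $|V|/2$, so that expansion cannot simply be applied a second time; here one must exploit vertex-transitivity and non-bipartiteness to argue that the two-step neighbourhood still grows, and it is exactly this step that forces the loss of a power of $\varepsilon$ and of $d$ relative to the naive estimate.

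Finally I would apply the Alon--Milman inequality to the vertex-transitive square graph of degree $d^2$, obtaining $\lambda\leq 1-c\,(h')^2/d^4$ for an absolute constant $c$. Combining this with $1+\mu_n\geq\tfrac12(1-\lambda)$ and substituting $h'\gtrsim\varepsilon^2/d^2$ gives a bound of the shape $1+\mu_n\geq\varepsilon^4/(2^{12}d^8)$ once the constants are tracked. I expect the isoperimetric transfer of the previous paragraph to be the only genuinely hard step; the reduction to the square graph, the connectivity argument, and the concluding Cheeger--Buser application are comparatively routine once the hypotheses have been arranged.
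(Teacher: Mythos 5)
Your reduction to the square graph is sound as far as it goes: $M^2$ is indeed the normalised adjacency operator of the square graph, $\mu_n^2\leq\lambda$, $1+\mu_n\geq\tfrac12(1-\lambda)$, and connectivity of the square graph follows from connectedness plus non-bipartiteness; the exponents your plan produces ($\varepsilon^4/d^8$ up to constants) even match the claimed bound. But the step you defer --- transferring $\varepsilon$-vertex expansion of $(V,E)$ to a bound $h'\gtrsim\varepsilon^2/d^2$ for the isoperimetric constant of the square graph --- is not one hard step among several; it is the entire content of the theorem, and you give no argument for it. As this paper itself notes, combinatorial expansion does \emph{not} imply two-sided spectral expansion for general regular graphs: the standard counterexamples are excellent non-bipartite expanders admitting an ``approximate bipartition'' $S$ with $\calN(\calN(S))$ barely larger than $S$, i.e.\ graphs whose square graphs are near-disconnected. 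So no rearrangement of $\calN(\calN(S))=\cup_{i,j}\theta_i(\theta_j(S))$ together with ``moving the expansion past the group action'' can work for all $|S|\leq|V|/2$ unless conditions (1)--(3) are exploited quantitatively on exactly those near-bipartition sets, and your sketch never says how condition (3) (the only place the $\psi_{i,v}$ could enter) is to be used there. Your appeal to condition (1) for connectivity is also off target: it forbids an index-two subgroup acting transitively on $V$, which says nothing directly about a $\calG$-stable splitting of the square graph.

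For comparison, the proof this paper relies on (Theorem \ref{thmPrincipalk1} is quoted from \cite{CheegerTwisted}, and its proof is summarised in Remark \ref{Remark:Summary}) runs in the opposite direction, by contradiction: assuming an eigenvalue in $[-1,-1+\ell_{\varepsilon,d}]$, one extracts a subset $H=H_{\varepsilon,d}\subseteq\calG$, proves via condition (4) that $H$ is an index-two subgroup, uses condition (1) to conclude that $H$ has exactly two orbits $\calO,\calO^c$ on $V$, derives from the eigenvalue assumption the upper bound $|\theta_i(\calO)\cap\calO|<|H|/(2t)$, and then uses condition (3) to show that if either orbit contained an edge one would have $|\theta_i(\calO)\cap\calO|\geq|H|/(2t)$; hence both orbits are independent sets, the graph is bipartite, and non-bipartiteness gives the contradiction. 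In other words, the hypothetical spectral defect is converted into an \emph{exact} algebraic bipartition rather than into a quantitative expansion statement for the square graph. If you pursued your route honestly, a ``bad set'' $S$ for the square graph would have to be shown to generate such an index-two subgroup with independent orbits --- at which point you would have reconstructed this argument, not bypassed it.
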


\begin{remark}
\label{Remark}
Note that for any automorphism $g$ of the graph $(V, E)$, it follows that for any vertex $v\in V$, the neighbourhood of $g(v)$ is equal to the image of the neighbourhood of $v$ under $g$, i.e., $\calN(g (v)) = g(\calN(v))$, and in particular, condition (4) of Theorem \ref{thmPrincipalk1} holds. Furthermore, if the group $\calG$ (as in Theorem \ref{thmPrincipalk1}) acts transitively on $V$, then the stabilisers of the elements of $V$ are of the same size, and hence a transitive action of $\calG$ of $V$ is transitive of order $t$ for some integer $t$. 
\end{remark}

We would like to prove that when the group $\calG$, as in Theorem \ref{thmPrincipalk1}, consists of graph automorphisms of $(V, E)$, then the conclusion of Theorem \ref{thmPrincipalk1} holds only under condition (1) and the assumption that $\calG$ acts transitively on $V$. More precisely, we establish the following result. 

\begin{theorem}\label{Thm:theta}
Assume that there exist permutations $\theta_1, \ldots, \theta_d: V\to V$ such that the vertices $v, \theta_i(v)$ are adjacent in $(V, E)$ for any $v\in V$ and $1\leq i \leq d$, and that $\calN(v)$ is equal to $\cup_{i=1}^d \{\theta_i(v)\}$ for any $v\in V$. Let $\calG$ be a subgroup of the group of automorphisms of the graph $(V, E)$ such that the following conditions hold. 
\begin{enumerate}
\item No index two subgroup of $\calG$ acts transitively on $V$.
\item The group $\calG$ acts transitively on $V$. 
\end{enumerate}
Assume that $|V|\geq 4$, the graph $(V, E)$ is non-bipartite and it is an $\varepsilon$-vertex expander for some $\varepsilon>0$. Then the nontrivial eigenvalues of the normalised adjacency operator of $(V, E)$ are greater than $-1 + \ell_{\varepsilon, d}$ with 
$$\ell_{\varepsilon, d}
= 
\frac{\varepsilon^4}{2^{9} d^{10}}.$$
\end{theorem}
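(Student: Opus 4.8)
The plan is to obtain Theorem~\ref{Thm:theta} as a consequence of Theorem~\ref{thmPrincipalk1}, the point being that three of the four structural hypotheses of Theorem~\ref{thmPrincipalk1} are automatic here and only condition (3) needs to be replaced by a workable surrogate. First I would collect the free hypotheses. Since $\calG$ acts transitively on $V$, all point-stabilisers share a common order and the action is transitive of order $t = |\calG|/|V|$, which is condition (2); this is exactly the content of the second half of the Remark. Since every $g\in\calG$ is a bona fide graph automorphism, the first half of the Remark gives $\calN(g(v)) = g(\calN(v))$ for all $v\in V$, whence $\calN(\calN(\tau(A))) = \tau(\calN(\calN(A)))$ for every $\tau\in\calG$, so condition (4) holds with equality. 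Condition (1) is a hypothesis of Theorem~\ref{Thm:theta} itself, and the assumptions $|V|\geq 4$, non-bipartiteness and $\varepsilon$-vertex-expansion carry over verbatim. Thus the entire difficulty is concentrated in manufacturing a substitute for the ``almost graph automorphism'' condition (3).

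That substitute comes from the same automorphism property. Fixing $g\in\calG$ and $v\in V$ and applying $g$ to the multiset $\calN(v) = \{\theta_1(v),\dots,\theta_d(v)\}$, the identity $g(\calN(v)) = \calN(g(v)) = \{\theta_1(g v),\dots,\theta_d(g v)\}$ furnishes a permutation $\sigma_{g,v}\in S_d$ with
$$
g\cdot\theta_i(v) = \theta_{\sigma_{g,v}(i)}(g\cdot v)\qquad(1\leq i\leq d).
$$
This is the index-shifted variant of condition (3) advertised in the introduction: it expresses $g\cdot\theta_i(v)$ as $\theta_j(g\cdot v)$ for the index $j=\sigma_{g,v}(i)$. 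The essential difference from condition (3) is that here the correspondence $i\mapsto j$ depends on both $g$ and $v$ and, crucially, is not governed by a single group (anti-)automorphism $\psi_{i,v}$, so the multiplicativity $\psi_{i,v}(g_1g_2)=\psi_{i,v}(g_1)\psi_{i,v}(g_2)$ that condition (3) supplies is no longer available.

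With this relation in hand I would re-run the proof of Theorem~\ref{thmPrincipalk1}, isolating the one place where condition (3) is invoked --- the step that transports a would-be eigenfunction with normalised eigenvalue near $-1$ along the group action and averages the resulting quadratic expression over $\calG$. Because $\sigma_{g,v}(i)$ takes values in the $d$-element set $\{1,\dots,d\}$, I would pigeonhole: for a fixed $i$ there exist an index $j$ and a subset $\calG'\subseteq\calG$ of proportion $|\calG'|/|\calG|\geq 1/d$ on which $\sigma_{g,v}(i)=j$ is constant, so that the rigid relation $g\cdot\theta_i(v)=\theta_j(g\cdot v)$ holds for every $g\in\calG'$. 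Averaging only over $\calG'$ restores, element by element, the transport that condition (3) guaranteed, without ever needing the missing multiplicativity. Propagating the factor $1/d$ through the estimates of Theorem~\ref{thmPrincipalk1} should produce the extra $d^2$ in the denominator, while I expect the passage to exact automorphisms --- so that double neighbourhoods are preserved precisely rather than only approximately --- to be responsible for the mild improvement of the numerical constant from $2^{12}$ to $2^{9}$, giving the claimed $\ell_{\varepsilon,d}=\varepsilon^4/(2^9 d^{10})$.

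The main obstacle is verifying that the pigeonholed subset $\calG'$, although only a $1/d$-fraction of $\calG$, remains rich enough to drive the contradiction with vertex expansion and non-bipartiteness. In the original argument the group is used not merely to relabel the $\theta_i$ but to move a fixed vertex across a positive proportion of $V$ and to control an averaged quadratic form; I must confirm that restricting to $\calG'$ degrades these quantities only by the polynomial factor claimed and that no step of the proof secretly relied on the full homomorphism structure of $\psi_{i,v}$ rather than on the single-element relation above. Carrying out this bookkeeping --- pinpointing exactly where condition (3) enters, substituting the fixed-index surrogate, and checking that the $1/d$ loss compounds to a clean $d^2$ and no worse --- is the delicate part; the verification of conditions (1), (2) and (4) is, by contrast, immediate from the Remark and the hypotheses.
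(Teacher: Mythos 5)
Your proposal follows essentially the same route as the paper's proof: the identity $g(\calN(v)) = \calN(g(v))$ yielding the index-shifted relation (the paper's Equation \eqref{Eqn:QuasiAuto}), the pigeonhole producing a set of density at least $1/d$ on which a fixed-index relation \eqref{Eqn:QuasiAutoIJ} holds, and the absorption of that $1/d$ loss into the smaller constant $\ell_{\varepsilon, d} = \varepsilon^4/(2^9 d^{10})$ are exactly the paper's three steps, and conditions (1), (2), (4) are dispatched just as you say. The one execution detail worth noting: condition (3) enters (per Remark \ref{Remark:Summary}) in a counting lower bound $|\theta_i(\calO)\cap \calO| \geq |H|/(dt)$ for an orbit $\calO$ of the index-two subgroup $H$, so the pigeonhole must be carried out within $H$ (or its complementary coset), not over all of $\calG$, in order that the resulting translates $h^\mo(u)$ land inside $\calO$ --- precisely the kind of bookkeeping you defer, and it goes through with no loss beyond the factor of $d$ you predict.
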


As explained in Remark \ref{Remark}, under the hypothesis of Theorem \ref{Thm:theta}, conditions (1), (2), (4) of Theorem \ref{thmPrincipalk1} hold. If we could show that the steps of the proof of Theorem \ref{thmPrincipalk1}, that use condition (3), also hold under the hypothesis of Theorem \ref{Thm:theta}, then Theorem \ref{Thm:theta} would follow. We prove that this can indeed be achieved. 

\begin{remark}
\label{Remark:Summary}
Before proceeding to the proof of Theorem \ref{Thm:theta}, let us provide a summary of the proof of Theorem \ref{thmPrincipalk1}, highlighting the intermediate steps where condition (3) has been used. The proof of Theorem \ref{thmPrincipalk1} was obtained by the method of contradiction. First, it is assumed that the normalised adjacency operator of $(V, E)$ admits an eigenvalue in $[-1, -1 + \ell_{\varepsilon, d}]$. We considered a subset $H = H_{\varepsilon, d}$ of $\calG$ and using condition (4), we proved that $H$ is a subgroup of $\calG$ of index two. Using condition (1) of Theorem \ref{thmPrincipalk1}, it follows that the action of $H$ on $V$ has two orbits, and an orbit $\calO$ among them has certain specific properties, which yields that for any $1\leq i \leq d$, the inequality 
$$|\theta_i(\calO) \cap \calO| < \frac {|H|}{2t}$$
holds. Next, we used condition (3) of Theorem \ref{thmPrincipalk1}, to prove that if at least one of $\calO, \calO^c$ is not an independent subset, then 
$$|\theta_i(\calO) \cap \calO| \geq \frac {|H|}{2t}$$
holds, contradicting the above inequality. 
\end{remark}

\begin{proof}
[Proof of Theorem \ref{Thm:theta}]
By Remark \ref{Remark:Summary} and the preceding discussion, it suffices to prove that under the hypothesis of Theorem \ref{Thm:theta}, the inequality 
$$|\theta_i(\calO) \cap \calO| < \frac {|H|}{dt}$$
holds for any $1\leq i \leq d$, and in addition, the inequality 
$$|\theta_i(\calO) \cap \calO| \geq \frac {|H|}{dt}$$
holds if at least one of $\calO, \calO^c$ is not an independent subset. 

In the course of the proof of Theorem \ref{thmPrincipalk1}, using conditions (1), (2), (4) of Theorem \ref{thmPrincipalk1}, it is established that 
$$
|\theta_i(\calO)\cap \calO |
 \leq \left(\frac {2d\gamma}\varepsilon  + \sqrt{\frac{6d^2\gamma}{\varepsilon^2}} \right) \frac{|V|}{2}
 < \frac{|H|}{t}
$$
holds where 
$$\gamma =  d^2 \sqrt{2 
\left(\frac{\varepsilon^4}{2^9 d^8}\right)
\left(2- 
\left(\frac{\varepsilon^4}{2^9 d^8}\right)
\right)}.$$
As explained in Remark \ref{Remark}, under the hypothesis of Theorem \ref{Thm:theta}, conditions (1), (2), (4) of Theorem \ref{thmPrincipalk1} hold. Hence, the same argument implies that under the hypothesis of Theorem \ref{Thm:theta}, if we assume (which we do from now on) that the normalised adjacency operator of $(V, E)$ admits an eigenvalue in $[-1, -1 + \ell_{\varepsilon, d}]$ with 
$$
\ell_{\varepsilon, d}
= \frac{\varepsilon^4}{2^9 d^{10}},$$
then one obtains an index two subgroup $H$ of $\calG$ and using condition (1), it follows that the action of $H$ on $V$ has two orbits, and for an orbit $\calO$ among them, the inequality 
$$|\theta_i(\calO) \cap \calO| < \frac {|H|}{dt}$$
holds for any $1\leq i \leq d$. 

Note that for a vertex $v \in V$ and an automorphism $g$ of $(V, E)$, the image of a neighbour of $v$ under $g^\mo$, is a neighbour of $g^\mo (v)$, and hence the neighbour $g^\mo (\theta_j(v))$ of $g^\mo(v)$ is equal to $\theta_{i_{g, v, j}} (g^\mo(v))$ for some $1\leq i_{g, v, j} \leq d$. Consequently, for any automorphism $g$ of the graph $(V, E)$, and $v\in V$ and $1\leq j \leq d$, 
\begin{equation}
\label{Eqn:QuasiAuto}
\theta_{i_{g, v, j}} (g^\mo (v)) = g^\mo (\theta_j(v))
\end{equation}
holds for some $1\leq i_{g, v, j} \leq d$.

Suppose two vertices $u, v$ of $\calO^\dag$ are adjacent where $\calO^\dag$ is one of $\calO, \calO^c = V \setminus \calO$. Let $H^\dag$ denote $H$ (resp. $H^c = \calG \setminus H$) if $\calO^\dag  = \calO$ (resp. $\calO^\dag = \calO^c$). Assume that $u = \theta_j(v)$. Consider the map 
$$\psi_{v, j} : H^\dag \to \{1, 2, \ldots, d\},\quad
g \mapsto i_{g, v, j}.$$
Note that a fibre of this map having maximal size contains at least $\frac {|H^\dag|}d$ elements, and hence for some integer $1\leq i \leq d$, the inequality $|H^\ddag  |\geq \frac {|H^\dag|}d$ holds where $H^\ddag = \psi_{v, j}^\mo(i)$. Thus, for any $g\in H^\ddag$, it follows that $i_{g, v, j} = \psi_{v, j}(g) = i$, which yields 
\begin{equation}
\label{Eqn:QuasiAutoIJ}
\theta_i(g^\mo(v)) = g^\mo (\theta_j (v)) .
\end{equation}
This implies that 
\begin{align*}
|\theta_i(\calO)\cap \calO| 
& = 
|\theta_{i} (\calO)\cap \calO| \\
& = 
|\theta_{i} (H^\dag v) \cap H^\dag u|\\
& = |\{\theta_{i}(h^\mo v) \,|\, h\in H^\dag\}\cap H^\dag u|\\
& \geq |\{\theta_{i}(h^\mo v) \,|\, h\in H^\ddag\}\cap H^\dag u|\\
& \geq |\{h^\mo (\theta_j(v)) \,|\, h\in H^\ddag\}\cap H^\dag u|\\
& = |\{h^\mo (u) \,|\, h\in H^\ddag\}\cap H^\dag u|\\
& = |\{h^\mo (u) \,|\, h\in H^\ddag\}|\\
& \geq \frac{|H^\ddag|}{t} \\
& \geq \frac{|H^\dag|}{dt}\\
& = \frac{|H|}{dt}
\end{align*}
hold. Hence, $\calO$ and $\calO^c$ are independent subsets of $V$. So, the graph $(V, E)$ is bipartite, which contradicts the hypothesis. Consequently, the normalised adjacency operator of $(V, E)$ does not admit an eigenvalue in $[-1, -1 + \ell_{\varepsilon, d}]$
with 
$$
\ell_{\varepsilon, d}
= \frac{\varepsilon^4}{2^9 d^{10}}.$$
\end{proof}

Recall that a graph is said to be \textit{vertex-transitive} if its automorphism group acts transitively on its vertex set, i.e., for any two vertices $u, v$, there exists an automorphism $\varphi$ of the graph such that $\varphi(u) = v$. 

\begin{proof}[Proof of Theorem \ref{Thm:Intro}]
To establish Theorem \ref{Thm:Intro}, it suffices to prove that if the graph $(V, E)$ has degree $d$ and it is non-bipartite, vertex-transitive, and an $\varepsilon$-vertex expander for some $\varepsilon>0$, then the nontrivial eigenvalues of the normalised adjacency operator of $(V, E)$ are greater than $-1 + \frac{\varepsilon^4}{2^{9} d^{10}}$. 

By the Birkhoff-von Neumann theorem \cite[Theorem 5.5]{vanLintWilsonCourseCombi}, there exist permutations $\theta_1, \ldots, \theta_d: V\to V$ such that the vertices $v, \theta_i(v)$ are adjacent in $(V, E)$ for any $v\in V$ and $1\leq i \leq d$, and that $\calN(v)$ is equal to $\cup_{i=1}^d \{\theta_i(v)\}$ for any $v\in V$. Let $\calG$ be a subgroup of the automorphism group of the graph $(V, E)$ which acts transitively on $V$. Replacing $\calG$ by one of its subgroups, we may assume that no proper subgroup of $\calG$ acts transitively on $V$. It follows from Theorem \ref{Thm:theta} that the nontrivial eigenvalues of the normalised adjacency operator of $(V, E)$ are greater than $-1 + \frac{\varepsilon^4}{2^{9} d^{10}}$. 
\end{proof}

\subsection{Acknowledgements}
The work of the first author is supported by the ERC grant 716424 - CASe of K. Adiprasito. The second author acknowledges the INSPIRE Faculty Award (IFA18-MA123) from the Department of Science and Technology, Government of India. 

\def\cprime{$'$} \def\Dbar{\leavevmode\lower.6ex\hbox to 0pt{\hskip-.23ex
  \accent"16\hss}D} \def\cfac#1{\ifmmode\setbox7\hbox{$\accent"5E#1$}\else
  \setbox7\hbox{\accent"5E#1}\penalty 10000\relax\fi\raise 1\ht7
  \hbox{\lower1.15ex\hbox to 1\wd7{\hss\accent"13\hss}}\penalty 10000
  \hskip-1\wd7\penalty 10000\box7}
  \def\cftil#1{\ifmmode\setbox7\hbox{$\accent"5E#1$}\else
  \setbox7\hbox{\accent"5E#1}\penalty 10000\relax\fi\raise 1\ht7
  \hbox{\lower1.15ex\hbox to 1\wd7{\hss\accent"7E\hss}}\penalty 10000
  \hskip-1\wd7\penalty 10000\box7}
  \def\polhk#1{\setbox0=\hbox{#1}{\ooalign{\hidewidth
  \lower1.5ex\hbox{`}\hidewidth\crcr\unhbox0}}}
\providecommand{\bysame}{\leavevmode\hbox to3em{\hrulefill}\thinspace}
\providecommand{\MR}{\relax\ifhmode\unskip\space\fi MR }
\providecommand{\MRhref}[2]{%
  \href{http://www.ams.org/mathscinet-getitem?mr=#1}{#2}
}
\providecommand{\href}[2]{#2}

\end{document}